\title{A note on totally-omnitonal graphs}
\date{}
\begin{document}
\newtheorem{theoremnum}{Theorem}[section]
\newtheorem*{lemma*}{Lemma}
\newtheorem{theorem}{Theorem}
\renewcommand*{\thetheorem}{\Alph{theorem}}

\newtheorem{definition}{Definition}[section]
\newtheorem{observation}[theoremnum]{Observation}
\newtheorem{proposition}[theorem]{Proposition}
\newtheorem{corollary}[theoremnum]{Corollary}
\newtheorem{lemma}[theoremnum]{Lemma}
\newcommand{\bal}{{\rm bal}}
\newcommand{\sbal}{{\rm sbal}}
\newcommand{\ot}{{\rm ot}}
\newcommand{\ex}{{\rm ex}}
\newtheorem{Ex}[theoremnum]{$\rhd$ Example}
\newcommand{\tot}{{\rm tot}}

\DeclareGraphicsExtensions{.pdf,.png,.jpg}

\author{Yair Caro \\ Department of Mathematics\\ University of Haifa-Oranim \\ Israel \and Josef  Lauri\\ Department of Mathematics \\ University of Malta
\\ Malta \and Christina Zarb \\Department of Mathematics \\University of Malta \\Malta }

\maketitle

\begin{abstract}
Let the edges of the complete graph $K_n$ be coloured red or blue, and let $G$ be a graph with $|V(G)| < n$.  Then $\ot(n,G)$ is defined to be the minimum integer, if it exists, such that  any  such colouring of $K_n$ contains a copy of $G$ with $r$ red edges and $b$ blue edges for any $r,b \geq 0$ with $r+b= e(G)$.   If $\ot(n,G)$ exists for every sufficiently large $n$, we say that $G$ is \emph{omnitonal}.   Omnitonal graphs were introduced by Caro, Hansberg and Montejano  [arXiv:1810.12375,2019].  Now let $G_1$, $G_2$ be two copies of $G$ with their edges coloured red or blue.  If there is a colour-preserving isomorphism from $G_1$ to $G_2$ we say that the 2-colourings of $G$ are equivalent.  Now we define $\tot(n,G)$  to be the minimum integer, if it exists, such that  any  such colouring of $K_n$ contains all non-quivalent colourings of $G$ with $r$ red edges and $b$ blue edges for any $r,b \geq 0$ with $r+b= e(G)$.   If $\tot(n, G)$ exists for every sufficiently large $n$, we say that G is \emph{totally-omnitotal}.

In this note we show that the only totally-omnitonal graphs are stars or star forests namely a forest all of whose components are stars.
\end{abstract}
\section{Introduction}

By a 2-colouring of the complet graph $K_n$ we mean a function $f:E(K_n) \to \{red,blue\}$.  The set of edges of $K_n$ coloured red or blue is denoted by $R$ or $B$, respectively.  For short we also denote by $R$, $B$ the subgraphs of $K_n$ induced by these edge sets.  If $G$ is a subgraphs of such a 2-coloured $K_n$ with $r$ red edges $b$ blue edges we say that $K_n$ contains an $(r,b)$-coloured copy of $G$.
We recall the definition of omnitonal graphs from \cite{2018arXivCHM}. For a given graph $G$, $\ot(n,G)$ is defined to be the minimum integer, if it exists, such that  any 2-colouring of $K_n$, $n \geq |V(G)|$,  with $\min\{|R|,|B|\} > \ot(n,G)$ contains an $(r,b)$-coloured copy of $G$ for any $r \geq 0$ and $b \geq 0$ such that $r+b= e(G)$, where $e(G)=|E(G)|$.  If $\ot(n,G)$ exists for every sufficiently large $n$, we say that $G$ is \emph{omnitonal}.   

We now define totally-omnitonal graphs .  Let $G_1$, $G_2$ be two $(r,b)$-coloured copies of $G$.  Then if there is a colour preserving  isomorphism $\phi: G_1 \to G_2$, we say that the two colourings $G_1$ and $G_2$ of $G$ are equivalent.  Otherwise the colourings are said to be non-equivalent.  Now, for a given graph $G$, $\tot(n,G)$ is defined to be the minimum integer, if it exists, such that  any 2-colouring of $E(K_n)$ with $\min\{|R|,|B|\} > \tot(n,G)$ contains every non-equivalent $(r,b)$-coloured copy of $G$ for any $r \geq 0$ and $b \geq 0$ such that $r+b= e(G)$.  If $\tot(n,G)$ exists for every sufficiently large $n$, we say that $G$ is  \emph{totally-omnitonal}.

For other graph-theoterical terms we refer the reader to West \cite{west2017introduction}. We just recall that a star, denoted by $K_{1,p}$ is the graph consisting of one vertex joined to each of $p$ other vertices. Therefore $K_2$ is the star $K_{1,1}$.

The main aim of this note is to show that a graph $G$ is totally-omnitonal if and only  if it is a star or a star forest, namely a forest all of whose components are stars.

In several places in this note we shall make use of the following result, which is part of Theorem 4.1 in \cite{2018arXivCHM}.

\begin{theorem}  \label{chm4_1}
Let $n$ and $k$ be positive integers such that $n \geq 4k$. Then 
\[
\ot(n, K_{1,k})= 
\begin{cases}
\lfloor (\frac{(k-1}{2})n \rfloor, & \text{for $k \leq 3$},\\
(k-2)n - \frac{k^2}{2}+ \frac{3}{2}k-1, & \text{for $k\geq 4$}.
\end{cases}
\]
\end{theorem}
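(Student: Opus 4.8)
The plan is to compute $\ot(n,K_{1,k})$ directly from its definition as an extremal quantity. Observe first that a $2$-colouring of $K_n$ contains an $(r,b)$-coloured copy of $K_{1,k}$ (with $r+b=k$) if and only if some vertex $v$ has at least $r$ red and at least $b$ blue incident edges, the star being obtained by selecting $r$ red and $b$ blue edges at $v$. Consequently a colouring fails to realise the split $(r,b)$ precisely when every vertex is \emph{red-poor} ($d_R(v)\le r-1$) or \emph{blue-poor} ($d_B(v)\le b-1$). Since $\ot(n,K_{1,k})$ equals the maximum of $\min\{|R|,|B|\}$ over all colourings that miss at least one split, I would determine, for each split $(r,b)$, the largest value of $\min\{|R|,|B|\}$ among colourings missing that split, and then maximise over $(r,b)$.

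For the upper bound, fix a colouring missing the split $(r,b)$. If $r=0$ (or symmetrically $b=0$) every vertex satisfies $d_B(v)\le k-1$, whence $|B|\le\lfloor(k-1)n/2\rfloor$ and so $\min\{|R|,|B|\}\le\lfloor(k-1)n/2\rfloor$. If $r,b\ge1$, partition $V(K_n)=A\sqcup B'$ into the red-poor and blue-poor vertices; because $n\ge 4k$ no vertex can be both, so this is a genuine partition with $|A|=a$. The decisive step is a counting argument on the $A$--$B'$ edges: each vertex of $A$ sends at least $n-r-a+1$ blue edges into $B'$, while each vertex of $B'$ receives at most $b-1$ blue edges in total, forcing $a(n-r-a+1)\le(n-a)(b-1)$, which for $n\ge 4k$ yields $a\le b-1$ (so the red-poor part, carrying the scarce colour blue, is of bounded size). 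A budget count then gives $|B|\le\binom{a}{2}+\tfrac12(n-a)(a+b-1)$, and since $|R|$ is of order $n^2$ we have $\min\{|R|,|B|\}=|B|$. It remains to maximise $\binom{a}{2}+\tfrac12(n-a)(a+b-1)$ subject to $a\le b-1$ and $b\le k-1$; I would check that this expression has positive partial derivatives $\tfrac{n-b}{2}$ in $a$ and $\tfrac{n-a}{2}$ in $b$, so the maximum occurs at $a=k-2$, $b=k-1$ (the split $(1,k-1)$), where it equals $(k-2)n-\tfrac{k^2}{2}+\tfrac32 k-1$. Comparing the two regimes, $\lfloor(k-1)n/2\rfloor$ dominates for $k\le 3$ and $(k-2)n-\tfrac{k^2}{2}+\tfrac32 k-1$ dominates for $k\ge 4$, yielding the claimed upper bound in each case.

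For the matching lower bounds I would exhibit explicit colourings attaining these values. For $k\le 3$, colour a spanning (near-)$(k-1)$-regular subgraph blue and everything else red; every vertex then has $d_B\le k-1$, so the $(0,k)$-star is missed while $\min\{|R|,|B|\}=|B|=\lfloor(k-1)n/2\rfloor$. For $k\ge 4$, take a set $A$ of $k-2$ vertices, colour blue every edge incident to $A$ and colour the remaining clique on $V\setminus A$ red; then each $A$-vertex has no red edge and each other vertex has exactly $k-2<k-1$ blue edges, so no vertex is the centre of a $(1,k-1)$-star, while $|B|=\binom{k-2}{2}+(k-2)(n-k+2)=(k-2)n-\tfrac{k^2}{2}+\tfrac32 k-1$ and $|R|$ is quadratic in $n$. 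These constructions show that $\ot(n,K_{1,k})$ is at least the stated value, completing the determination.

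I expect the main obstacle to lie in the upper bound, and within it in two quantitative points: that the hypothesis $n\ge 4k$ is genuinely strong enough to force $a\le b-1$ from the edge inequality $a(n-r-a+1)\le(n-a)(b-1)$, and that the subsequent optimisation, together with the floor in the regime $k\le 3$, must be executed carefully enough to recover the exact additive constant $-\tfrac{k^2}{2}+\tfrac32 k-1$ rather than merely the leading term $(k-2)n$.
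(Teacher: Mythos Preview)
The paper does not prove this statement at all: it is quoted verbatim as part of Theorem~4.1 of Caro, Hansberg and Montejano \cite{2018arXivCHM} and then used as a black box. There is therefore no in-paper argument to compare your proposal against.

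As a stand-alone sketch your approach is sound and is in fact the natural extremal computation: reduce the existence of an $(r,b)$-coloured $K_{1,k}$ to a degree condition at a single vertex, analyse colourings that miss a fixed split $(r,b)$ via the red-poor/blue-poor partition, bound the minority colour class, and match with the obvious constructions (a sparse $(k-1)$-regular blue graph for the monochromatic split, and a blue ``hub'' set of size $k-2$ for the split $(1,k-1)$). Two places deserve care when you write it out in full. First, the implication ``$a(n-r-a+1)\le (n-a)(b-1)$ and $n\ge 4k$ force $a\le b-1$'' is the crux of the upper bound and should be argued explicitly; it is not immediate from the inequality as written. Second, your displayed bound $|B|\le\binom{a}{2}+\tfrac12(n-a)(a+b-1)$ does not follow from the degree constraints alone for general $a$ (the straightforward count gives $|B|\le\binom{a}{2}+(n-a)(b-1)$); the two expressions coincide at the extremal point $a=b-1$, so your final value is correct, but you should either justify the stated bound or replace it by the simpler one.
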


\section{Results}

\subsection{A canonical colouring for $K_n$}

We shall need a colouring of this type for the edges of $K_n$.  Let $A$ be a non-empty subset of $V(K_n)$ of size at most $n-1$, and let $B$ be $V(K_n)-A$. Colour red all $\binom{|A|}{2}$ edges joining vertices in $A$ and colour blue all the remaining $\binom{|B|}{2} + |A|\cdot|B|$ edges of $K_n$. We observe that in such a colouring of $K_n$, there is no path $P_4$ with an $r-b-r$ colouring, that is, a colouring of the edges of $P_4$ such that the middle edge is coloured blue and the pendant edges are both coloured red. Also, there is no colouring of $K_3$ with two red and one blue edges.  In order to use such a 2-colouring of $K_n$ to show that $P_4$  and $K_3$ are not totally-omnitonal we shall need to show, for reasons which shall become clear below, that there is such a colouring for an infinite sequence of complete graphs in which the number of red edges is equal to the number of blue edges. 

For this to hold, suppose $|A|=r$.   We shall now use an argument employed in \cite{caro2019zero2}.  We require that 
\[\frac{r(r-1)}{2} = \frac{n(n-1)}{4},   \]
that is,
\[2(r^2-r)=n^2-n.\]
But this is equivalent to 
\[ (2n-1)^2 - 2(2r-1)^2 = -1,\]
and, if we let $y$ and $x$ be, respectively, the two odd integers $2n-1$ and $2r-1$, we obtain,
\[ y^2 - 2x^2 = -1.\]
But this is Pell's equation which is known to have an infinite number of solutions for $x$ and $y$ \cite{andreescu}.

So we define a \emph{canonical colouring} of $K_n$, if it exists, to be a colouring in which the edges of a subclique are coloured red, while all the other edges are coloured blue, and the number of red edges is equal to the number of blue edges.  We therefore have the following result from \cite{caro2019zero2}.

\begin{theoremnum}
	There is an infinite sequence of complete graphs $K_n$ for which a canonical colouring exists.
\end{theoremnum}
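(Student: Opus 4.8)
The plan is to reduce the existence of a canonical colouring of $K_n$ to finding integer solutions of the negative Pell equation $y^2 - 2x^2 = -1$, exactly along the lines already sketched in the discussion above, and then to invoke the classical fact that this equation has infinitely many solutions.

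First I would record precisely what a canonical colouring demands. If the red subclique has $r$ vertices, then the number of red edges is $\binom{r}{2}$ and the number of blue edges is $\binom{n}{2} - \binom{r}{2}$, so requiring these to be equal is the same as requiring $2\binom{r}{2} = \binom{n}{2}$, i.e. $2r(r-1) = n(n-1)$. Multiplying through by $4$ and completing the square on each side turns this into $(2n-1)^2 - 2(2r-1)^2 = -1$, so on setting $y := 2n-1$ and $x := 2r-1$ we are asking exactly for solutions of $y^2 - 2x^2 = -1$ in positive integers.

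Next I would appeal to \cite{andreescu}: the negative Pell equation $y^2 - 2x^2 = -1$ has infinitely many solutions in positive integers, generated from the fundamental solution $(x,y) = (1,1)$ by the recurrence arising from multiplication by the unit $3 + 2\sqrt{2}$, so that $x$, and hence $n = (x+1)/2$ mapped back, grows without bound. The one point that genuinely needs checking is integrality: I must be able to recover $n = (y+1)/2$ and $r = (x+1)/2$ as integers. This holds because every solution of $y^2 - 2x^2 = -1$ has both $x$ and $y$ odd: from $y^2 = 2x^2 - 1$ one sees $y$ is odd, and then $x^2 = (y^2+1)/2$ is itself odd, forcing $x$ odd. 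Finally I would note that for every solution other than the degenerate $(x,y) = (1,1)$ we have $1 \le r \le n-1$, since $r$ is asymptotically $n/\sqrt{2} < n$; the first genuine instance is $(x,y) = (5,7)$, giving $n = 4$ and $r = 3$, i.e.\ a red triangle inside $K_4$ with the three remaining edges blue.

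I do not expect any serious obstacle here. The only mildly delicate step is the parity observation that guarantees $n$ and $r$ come out as integers; once that is in place, the statement is an immediate consequence of the well-known infinitude of solutions of $y^2 - 2x^2 = -1$, which was already the motivation for the computation preceding the statement.
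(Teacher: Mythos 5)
Your proposal is correct and follows essentially the same route as the paper: reduce the equal-edge-count condition to $2r(r-1)=n(n-1)$, rewrite it as the negative Pell equation $(2n-1)^2-2(2r-1)^2=-1$, and invoke the infinitude of its solutions. Your additional parity check showing that $x$ and $y$ are necessarily odd (so that $n$ and $r$ are genuine integers) is a detail the paper leaves implicit, and it is verified correctly.
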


\subsection{$P_4$ and $K_3$ are not totally-omnitotal}

Suppose that $P_4$ (or $K_3$) is totally-omnitonal.
Therefore exists a $t(n)= \tot(n, P_4)$ (or $t(n)=\tot(n, K_3)$), such that  any two-colouring of $K_n$  with $\min\{|R|,|B|\} > t(n)$ contains an $r-b-r$ colouring of $P_4$ (or, a $(2,1)$-colouring of $K_3$)  for $n$ sufficiently large. Note that $t(n)$ must be less than $n(n-1)/4$. However, we have seen that for any $N$ there is a $K_n$ with $n>N$ which has a canonical colouring. In this colouring, $|R|=|B|=n(n-1)/4$, and $K_n$ does not contain an $r-b-r$ colouring of $P_4$ (nor, a $(2,1)$-colouring of $K_3$). We have therefore proved the following.

\begin{lemma} \label{p4k3}
	Both $P_4$ and $K_3$ are not totally-omnitonal.
\end{lemma}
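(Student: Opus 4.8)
The plan is to defeat total-omnitonality of $P_4$ and of $K_3$ by an extremal (balanced) construction, namely the canonical colouring of the previous subsection. Recall that for $G$ to be totally-omnitonal one needs, for all large $n$, a threshold $t(n)=\tot(n,G)$ that lies genuinely below the balanced value $n(n-1)/4$; equivalently, $G$ fails to be totally-omnitonal as soon as, for infinitely many $n$, some $2$-colouring of $K_n$ with $|R|=|B|$ omits one of the non-equivalent $(r,b)$-coloured copies of $G$ that it would otherwise be required to contain. So it suffices to pin down such colourings together with such a forbidden copy, separately for $P_4$ and for $K_3$.

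First I would invoke the result of the previous subsection: there is an infinite set of $n$ for which $K_n$ carries a canonical colouring, i.e.\ there is a set $A\subseteq V(K_n)$ whose internal edges are all red, all other edges blue, and $|R|=|B|=n(n-1)/4$. Next I would record two structural facts about any such colouring. If $v_1v_2v_3v_4$ were a $P_4$ coloured red--blue--red, then the red edges $v_1v_2$ and $v_3v_4$ force $v_1,v_2,v_3,v_4\in A$, so the edge $v_2v_3$ lies inside $A$ and is therefore red, contradicting that it is blue; hence no red--blue--red $P_4$ occurs. Likewise, if $uvw$ were a triangle with exactly one blue edge, say with $uv$ and $vw$ red, then $u,v,w\in A$, so $uw$ is red as well, contradicting that $uw$ is the blue edge; hence no $(2,1)$-coloured $K_3$ occurs.

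Finally I would assemble the contradiction: the red--blue--red colouring is one of the (two) non-equivalent $(2,1)$-colourings of $P_4$, and the $(2,1)$-coloured $K_3$ is the unique non-equivalent $(2,1)$-colouring of $K_3$, so a totally-omnitonal $P_4$ (resp.\ $K_3$) would be forced to contain it whenever $\min\{|R|,|B|\}>\tot(n,G)$; choosing $n$ from the infinite Pell sequence gives a $K_n$ with a canonical colouring, for which $\min\{|R|,|B|\}=n(n-1)/4>\tot(n,G)$, contradicting the structural facts above. The only real delicacy is the bookkeeping noted at the start — making sure that a finite $\tot(n,G)$ consistent with total-omnitonality must lie below $n(n-1)/4$, and that the Pell argument indeed supplies infinitely many admissible $n$ — and both points are exactly what the preceding subsection provides, so only the short case analysis above remains.
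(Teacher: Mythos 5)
Your proposal is correct and follows essentially the same route as the paper: it uses the canonical colouring (red clique on $A$, all else blue) with $|R|=|B|=n(n-1)/4$ for the infinitely many $n$ supplied by the Pell-equation argument, verifies that such a colouring contains no red--blue--red $P_4$ and no $(2,1)$-coloured $K_3$, and derives the contradiction with $\min\{|R|,|B|\}>\tot(n,G)$. The only difference is that you spell out the two structural verifications explicitly, which the paper merely asserts in its subsection on the canonical colouring.
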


We now use this lemma to characterise connected omnitonal graphs.

\begin{theoremnum} \label{not_tot}
	Let $G$ be a connected graph on at least three vertices which is not a star.  Then $G$ is not totally-omnitonal.
\end{theoremnum}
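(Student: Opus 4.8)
The plan is to reduce to the canonical-colouring argument of Lemma~\ref{p4k3}, via the observation that every connected non-star graph on at least three vertices contains $P_4$ or $K_3$ as a subgraph.

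\textbf{Structural step.} I would first show that a connected graph on at least three vertices with no $P_4$ subgraph is either a star or $K_3$. Let $v_1\cdots v_\ell$ be a longest path; connectivity forces $\ell\ge 3$ and the absence of a $P_4$ forces $\ell=3$. A short case analysis on how the remaining vertices attach to $v_1,v_2,v_3$ --- any neighbour of $v_1$ or of $v_3$ off the path, any vertex at distance $\ge 2$ from the path, any edge between two off-path vertices, and (if $v_1v_3$ is an edge) any off-path vertex at all, each produce a $P_4$ --- leaves only the possibilities that every other vertex is pendant at $v_2$ (a star) or that $G=K_3$. Hence a connected non-star $G$ on $\ge 3$ vertices contains a copy $H$ of $P_4$ or of $K_3$.

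\textbf{Colourings realisable in a canonical colouring.} Fix a canonical colouring of $K_n$: red clique on $A$, all other edges blue. In any embedded copy of $G$ an edge is red exactly when both endpoints lie in $A$, so the red edge set of the copy equals $E(G[S])$ for $S$ the set of its vertices placed in $A$; in particular the red edge set is always an \emph{induced} subgraph of $G$. Now pick a 2-colouring $c$ of $G$ that is not of this form: if $H=P_4$ with consecutive vertices $x,u,v,y$, colour $xu$ and $vy$ red and all remaining edges of $G$ (in particular $uv$) blue; if $H=K_3$ on $x,y,z$, colour $xy,xz$ red and all remaining edges (in particular $yz$) blue. If $c$ coincided with ``red $=E(G[S])$'' for some $S$, then $xu,vy$ red would force $u,v\in S$ (resp.\ $xy,xz$ red would force $y,z\in S$) and hence $uv$ (resp.\ $yz$) red, a contradiction. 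Since ``the red edge set is an induced subgraph'' is invariant under colour-preserving isomorphism, no copy of $G$ equivalent to $c$ appears in any canonically coloured $K_n$.

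\textbf{Conclusion.} Suppose $G$ were totally-omnitonal. Then, exactly as in the proof of Lemma~\ref{p4k3}, for all large $n$ there is $t(n)=\tot(n,G)<n(n-1)/4$; choose such an $n$ for which $K_n$ admits a canonical colouring, which exists by the theorem on canonical colourings. There $\min\{|R|,|B|\}=n(n-1)/4>t(n)$, so $K_n$ must contain a copy of $G$ realising every non-equivalent $(r,b)$-coloured pattern with $r+b=e(G)$, in particular one equivalent to $c$ --- contradicting the previous step. I expect the structural step to be the main point requiring work; the only other thing to watch is that excluding the single colouring $c$ really excludes every copy equivalent to it, which is exactly why the invariance of ``red $=E(G[S])$'' under colour-preserving isomorphism is recorded.
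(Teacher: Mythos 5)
Your proof is correct and follows essentially the same route as the paper: locate a $P_4$ (or $K_3$) inside $G$, exhibit a colouring of $G$ whose restriction to it is the forbidden red--blue--red (resp.\ $(2,1)$) pattern, and rule it out via the canonical colouring of $K_n$ exactly as in Lemma~\ref{p4k3}. Your write-up is in fact somewhat more careful than the paper's on two points it leaves implicit: the structural fact that a connected non-star on at least three vertices contains a $P_4$ or $K_3$ (the paper gets its $P_4$ from a closest pair of independent edges), and the verification that excluding the single colouring $c$ excludes its entire equivalence class, via the observation that in a canonical colouring the red edges of any embedded copy form an induced subgraph.
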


\begin{proof}
	If $G$ is not a star or $K_3$ which we already proved to be non-omnitonal,  then it is well-known that it must have a pair of independent edges. Let $e_1, e_2$ be the closest pair of independent edges. They must therefore be joined by an edge $e_3$. Now colour $e_3$ blue and all the other edges of $G$ red. 

This is a specific $(e(G)|– 1, 1)$-colouring of $G$, but as we have shown in Lemma \ref{p4k3}, for infinitely many values of $n$ there is a canonical colouring of $K_n$  which  does not contain this colouring of $G$ due to the specific colouring of the $P_4$ subgraph of $G$. Therefore $G$ cannot be totally-omnitotal.
\end{proof}

However we do have the following. 

\begin{lemma}
	Stars are totally-omnitonal.
\end{lemma}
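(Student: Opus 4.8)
The plan is to show that for a star $K_{1,k}$, any $(r,b)$-colouring with $r+b=k$ is determined up to equivalence purely by the pair $(r,b)$: since all edges of $K_{1,k}$ are incident to the centre, the automorphism group acts as the full symmetric group on the pendant edges, so two colourings are colour-preserving isomorphic if and only if they have the same number of red edges. Hence ``containing every non-equivalent $(r,b)$-coloured copy of $K_{1,k}$'' reduces to ``containing, for each $r$ with $0 \le r \le k$, some copy of $K_{1,k}$ with exactly $r$ red edges at its centre.'' This is exactly the defining property of omnitonality, so $\tot(n,K_{1,k})$ exists as soon as $\ot(n,K_{1,k})$ does, and in fact $\tot(n,K_{1,k}) = \ot(n,K_{1,k})$.

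The second ingredient is that stars are omnitonal, which is precisely the content of Theorem~\ref{chm4_1} cited from \cite{2018arXivCHM}: for $n \ge 4k$ the quantity $\ot(n,K_{1,k})$ is given by the displayed formula, and in particular it exists for all sufficiently large $n$. I would state this explicitly and invoke it. Combining the two observations: given $n$ large and a $2$-colouring of $K_n$ with $\min\{|R|,|B|\} > \ot(n,K_{1,k})$, for every prescribed $r$ there is a vertex $v$ with at least $r$ red incident edges and at least $k-r$ blue incident edges (this is what $\ot$ guarantees), and choosing $r$ red neighbours and $k-r$ blue neighbours of $v$ yields a copy of $K_{1,k}$ realising the colour-pattern $(r,b)$; by the first paragraph this is the unique non-equivalent colouring with that red-count, so all non-equivalent colourings of $K_{1,k}$ appear.

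The only point needing a little care — and the one I would flag as the main (minor) obstacle — is the reduction in the first paragraph: one must check that the star's automorphisms really do realise every permutation of the pendant edges, so that the colour pattern is a complete invariant. For $k \ge 2$ this is immediate since $\mathrm{Aut}(K_{1,k}) \cong S_k$ acting on the leaves; the degenerate case $k=1$ (that is, $K_2 = K_{1,1}$) has only the two colourings ``red'' and ``blue'', each a single equivalence class, and is handled trivially (or directly, since $\ot(n,K_2)=0$). With that settled, the conclusion $\tot(n,K_{1,k}) = \ot(n,K_{1,k})$ follows and stars are totally-omnitonal.
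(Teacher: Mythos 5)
Your proposal is correct and follows essentially the same route as the paper: invoke Theorem~\ref{chm4_1} for the omnitonality of $K_{1,k}$, observe that by the symmetry of the star's edges any two $(r,b)$-coloured copies are equivalent, and conclude $\tot(n,K_{1,k})=\ot(n,K_{1,k})$. Your explicit verification that $\mathrm{Aut}(K_{1,k})$ acts as the full symmetric group on the leaves is a slightly more careful rendering of the paper's appeal to ``the symmetry of the edges,'' but the argument is the same.
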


\begin{proof}
	
	It is known from Theorem \ref{chm4_1} in \cite{2018arXivCHM} stated above, that $K_{1,p}$ is omnitonal. Therefore there is a number $\ot(n,K_{1,p})$ such that, for any positive integers $r, b$ with $r+b=p$, and for all $n$ sufficiently large, if $E(K_n)$ is two-coloured with $\min\{|R|, |B|\}>\ot(n,K_{1,p})$, then it contains an $(r,b)$-coloured copy of $K_{1,p}$. However, by the symmetry of the edges of $K_{1,p}$, any two $(r,b)$-coloured copies of $K_{1,p}$ with $r+b=p$ are equivalent. Therefore $K_{1,p}$ is totally-omnitonal with $\tot(n,K_{1,p})=\ot(n,K_{1,p})$.
\end{proof}

From all the above we can conclude the following.

\begin{corollary}
	A connected graph $G$ is totally-omnitotal if and only if it is a star.
\end{corollary}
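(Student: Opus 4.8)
The plan is to derive the corollary directly from the two results immediately preceding it, after first checking the degenerate small cases. For the ``if'' direction, I would simply invoke the Lemma just proved: if $G$ is a star $K_{1,p}$ then $G$ is totally-omnitonal, with $\tot(n,K_{1,p})=\ot(n,K_{1,p})$, and the latter exists for all large $n$ by Theorem~\ref{chm4_1}. When $p\le 1$ this is immediate anyway, since $K_{1,0}=K_1$ has no edges and $K_{1,1}=K_2$ has a single edge, so for each admissible pair $(r,b)$ there is only one colouring up to equivalence.

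For the ``only if'' direction I would argue by contraposition: assuming $G$ connected but not a star, I want to conclude that $G$ is not totally-omnitonal. First I would dispose of the small cases: a connected graph on at most two vertices is $K_1$ or $K_2=K_{1,1}$, hence a star, so that case cannot occur and we may assume $|V(G)|\ge 3$. Theorem~\ref{not_tot} then applies verbatim and gives that $G$ is not totally-omnitonal. Combining the contrapositive with the ``if'' direction yields the biconditional.

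I do not expect a real obstacle at the level of the corollary itself --- all the content has already been packaged into the preceding results. The substantive work lay in Theorem~\ref{not_tot}, which rests on Lemma~\ref{p4k3} (itself using the canonical colourings of $K_n$, whose existence for infinitely many $n$ follows from the solvability of the Pell equation $y^2-2x^2=-1$) together with the classical fact that a graph with no two independent edges is a star or a triangle; this is precisely what supplies, for a connected non-star on at least three vertices other than $K_3$, a ``closest'' pair of independent edges $e_1,e_2$ joined by an edge $e_3$, and hence an induced $r$--$b$--$r$ coloured $P_4$ that no canonical colouring contains. The only point I would make sure to state explicitly when writing the corollary up is the paper's convention that $K_2$ is the star $K_{1,1}$, so that the small-vertex cases are genuinely subsumed.
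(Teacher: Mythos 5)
Your proposal is correct and follows exactly the route the paper intends: the corollary is simply the conjunction of the Lemma that stars are totally-omnitonal and Theorem \ref{not_tot} for connected non-stars, with the trivial small-vertex cases noted. The paper gives no separate proof ("From all the above we can conclude\dots"), and your assembly of the pieces, including the explicit handling of $K_1$ and $K_2$, matches that reasoning.
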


\subsubsection{Disconnected totally-omnitonal graphs}

And so we come to our main theorem. By a star-forest we shall mean a graph all of whose connected components are stars.
 
\begin{theoremnum}
	A graph $G$ is totally omnitonal if and only if it is a star forest.
\end{theoremnum}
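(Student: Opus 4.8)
The plan is to prove both implications; the forward (necessity) direction is a routine extension of Lemma~\ref{p4k3} and Theorem~\ref{not_tot}, while the reverse (sufficiency) direction is where the new work lies. For necessity, suppose $G$ is not a star forest, so that some connected component $H$ of $G$ has at least three vertices and is not a star. First I would dispose of the case where $G$ contains a triangle: colour one edge of such a triangle blue and every other edge of $G$ red, obtaining a legitimate $(e(G)-1,1)$-colouring of $G$. Since a canonical colouring of $K_n$ contains no $(2,1)$-coloured $K_3$, it cannot contain this colouring of $G$; as canonical colourings exist for infinitely many $n$, for which $\min\{|R|,|B|\}=n(n-1)/4$ while $\tot(n,G)$, were it to exist, would have to be strictly smaller, we reach a contradiction and $G$ is not totally-omnitonal. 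Otherwise $G$ is triangle-free, so $H$ is a connected triangle-free graph on at least three vertices that is not a star; by the classical fact (used already in the proof of Theorem~\ref{not_tot}) such a graph has two independent edges, so I take the closest such pair $e_1,e_2$, necessarily joined by an edge $e_3$ with $e_1e_3e_2$ a $P_4$, colour $e_3$ blue and all other edges of $G$ red, and observe that this $(e(G)-1,1)$-colouring of $G$ forces an $r$-$b$-$r$ coloured $P_4$ in any embedded copy, which a canonical colouring forbids. Either way $G$ is not totally-omnitonal.

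For sufficiency, let $G$ be the star forest $K_{1,p_1}\cup\cdots\cup K_{1,p_m}$, ignoring isolated vertices (which only force $n\ge|V(G)|$), and put $p=\max_i p_i$. The first step is to describe the non-equivalent colourings: since all leaf-edges of a star are interchangeable by an automorphism, a $2$-colouring of $K_{1,p_i}$ is determined up to colour-preserving isomorphism by its number $r_i$ of red edges, and a $2$-colouring of $G$ is determined up to equivalence by the multiset of these counts, components of equal size being interchangeable. Hence realizing an arbitrary non-equivalent $(r,b)$-colouring of $G$ amounts to finding vertex-disjoint stars $S_1,\dots,S_m$ in $K_n$ with $S_i\cong K_{1,p_i}$ and $S_i$ carrying exactly $r_i$ red edges, where $r_1+\cdots+r_m=r$ and $0\le r_i\le p_i$.

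I would then set $\tot(n,G)=c_G\,n$ for a constant $c_G$ depending only on $|V(G)|$ and $p$, say $c_G=|V(G)|+p+1$, and build the stars greedily. Given a target colouring, suppose $S_1,\dots,S_{j-1}$ have been embedded on a vertex set $U$ with $|U|\le|V(G)|$; the subgraph $K_n-U$ is a $2$-coloured complete graph on $n-|U|$ vertices in which each colour class has lost at most $|V(G)|\cdot n$ edges, so $\min\{|R'|,|B'|\}\ge\min\{|R|,|B|\}-|V(G)|\,n>p\,n\ge\ot(n-|U|,K_{1,p_j})$ by Theorem~\ref{chm4_1}, provided $n$ is large enough that $n-|V(G)|\ge 4p$. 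Omnitonality of $K_{1,p_j}$ then yields an $(r_j,p_j-r_j)$-coloured copy $S_j$ inside $K_n-U$, extending the embedding, and after $m$ steps we obtain the desired copy of $G$; since this threshold is linear in $n$ and works uniformly over the finitely many non-equivalent colourings, $G$ is totally-omnitonal. The main obstacle is essentially bookkeeping: checking that the balance $\min\{|R'|,|B'|\}$ survives every deletion step with enough room left to invoke Theorem~\ref{chm4_1}. Combining the two directions with the characterisation already obtained for connected graphs gives the theorem.
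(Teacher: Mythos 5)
Your proposal is correct and follows essentially the paper's own argument: necessity via canonical colourings of $K_n$ (which exist for infinitely many $n$ with $|R|=|B|$) excluding a $(2,1)$-coloured $K_3$ or an $r$-$b$-$r$ coloured $P_4$ arising from a closest pair of independent edges, and sufficiency by repeatedly invoking Theorem~\ref{chm4_1} to embed each star, with the prescribed number of red edges, in the complete graph remaining after deleting previously embedded stars, bounding the loss to each colour class linearly in $n$. The only difference is presentational: you run a greedy loop with the generous uniform threshold $(|V(G)|+p+1)n$, whereas the paper phrases the same peeling as an induction on the number of components with the sharper threshold $(p_1+\cdots+p_q+q-2)n$ and an explicit feasibility check, but the mechanism is identical.
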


\begin{proof}
	If $G$ is connected then we are done since we have already shown that the only connected totally-omnitonal graphs are stars. Therefore suppose $G$ is disconnected.
	 
	If even one component of $G$ is not totally-omnitonal, then $G$ is not totally-omnitotal. Therefore if even one component of $G$ is not a star then $G$ is not totally-omnitonal. This is because:
\begin{enumerate}
\item{If there is a $K_3$ component  then we colour $E(K_3)$ with one blue edge and two blue edges, and all the other edges of $G$  are coloured red. This is a specific $(e(G)-1,1)$-coloured pattern of $G$, which requires that $K_3$ will be coloured in a $(2,1)$-colouring, which is impossible as we have shown in Lemma  \ref{p4k3}.}
\item{If there is no $K_3$ but there is a component which is not a star, then this component must contain $P_ 4$,  and we colour the middle edge of this $P_4$ blue and all the other edges of $G$ red.  This is a specific $( e(G)-1 ,1)$–coloured pattern of $G$, but we have shown the canonical coloring cannot contain any $r-b-r$ coloured $P_4$,  hence all components must be stars.}
\end{enumerate}

Conversely, suppose each component of $G$ is totally-omnitonal, that is, each component is a star. We need to show that $G$ is totally-omnitonal.

We know from Theorem \ref{chm4_1} that  $\ot(n,K_{1,k })  <(k-1)n$   for $n \geq 4k$.  Let $G =  \cup K_{1,p_ j} for j = 1,\ldots,q$   and $p_1 \geq p_2 \geq \ldots \geq   p_ q$.  

Let  $ n\geq 4(p_1+p_2+ \ldots +p_q +q-1)$ then  we claim that $\tot(n,G)  < M(p_1,\ldots,p_q,n) :=  (p_1+p_2+..p_q +q-2 )n$.  We have to show first that  the conditions on $n$ and $M$ are feasible for every $q \geq 1$, namely  that for $n\geq 4(p_1+p_2+ \ldots +p_q +q-1)$  a colouring with $\min \{|R|,|B|\} \geq M(p_1,\ldots,p_q,n)$    exists.  Hence we have to show that $2(p_1+\ldots+p_q +q-2)n \leq n(n-1)/2  $.  But this is equivalent to   $4(p_1+\ldots+p_q +q -2) =  4(p_1 + \ldots+p_q +q -1)  - 4 \leq  n-4  < n-1$.  Hence the condition is satisfied with $n \geq 4(p_1+\ldots+p_q +q-1)$ and $M(p_1,\ldots,p_q,n)$ as defined above.

	We now prove the theorem by induction on $q$.  For $q = 1$ the result is true by Theorem \ref{chm4_1}.  Assume  result is true for $q-1$ components and assume $G$ has $q$ components.

Let $f$ be any colouring of the edges of $G$. We wish  to show that in any 2-colouring  $g :  E(K_n) \to \{ red ,blue \}$ with  $\min \{|R|,|B|\} \geq  (p_1+\ldots+p_q +q - 2)n$ there is a copy of $G$ on which  $g$ restricted to the edges of $G$ is equivalent to the colouring $f$ of $G$.

We have already shown above that such a colouring $g$ for $K_n$ exists for $n \geq 4( p_1+p_2+ \ldots+ p_ q +q -1)$.  So suppose $n \geq 4( p_1+p_2+ \ldots+ p_q +q-1)$. Let $G^* = G \backslash K_{1,p_q}$.  Since  $p_ 1 \geq p_q  $, $n  > 4p_q$   and $\min \{|R|,|B|\}  >  n(p_q-1)$, it follows from Theorem  \ref{chm4_1}  that there is a copy of $K_{1,p_q}$  which is precisely $f$-coloured, namely the colouring of $K_{1,p_q}$ is equivalent to the colouring induced by $f$.

Remove  the vertices of this $f$-coloured $K_{1,p_q}$ from $V(K_n)$ and remove the edges incident with at least one vertex of $V(K_{1,p_q})$.  We are left with  $n^*$  vertices  where $n^* = n - (p_q+1) \geq  4(p_1+\ldots+p_{q-1}+p_q +q -1)  - (p_q + 1)  >    4(p_1+\ldots+p_{q-1}+ q-2)$ and with  $\min \{|R|,|B|\} \geq  (p_1+\ldots+p_q +q – 2 )n  -  \binom{p_q  +1}{2}  -  ( p_q +1 )(n – p_q -1) >   (p_1+\ldots +p_q +q -2)n  -n(p_q +1) =  (p_1+\ldots+p_{q-1} +q-3)n > ( p_1+\ldots+p_{q-1} + q-3)n^*$. 

So the conditions for $q – 1$ components are satisfied.

Therefore by the induction hypothesis,  $K_{n^* }$ contains  an $f$-coloured copy of $G^*$ which together with the deleted $f$-coloured copy of $K_{1,p_q}$,  giving the required $f$-coloured copy of $G$ in $E(K_n)$.
\end{proof}

The bounds we have proved can now be stated as a corollary.

\begin{corollary}
Let $G$ be a star-forest with components $K_{1,p_1} \cup K_{1,p_2} \cup \ldots \cup K_{1,p_q}$ with $p_1 \geq p_2 \geq \ldots \geq p_q$.  Then, for  $n \geq 4(p_1+\ldots+p_q +q-1)$, any 2-colouring of $E(K_n)$ with $\min \{|R|,|B|\} > (p_1+ \ldots+ p_q +q-2)n$ contains every non-equivalent $(r,b)$-coloured copy of $G$, for any $r,b \geq 0$ and $r+b=e(G)$.
\end{corollary}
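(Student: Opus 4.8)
The plan is to establish the two implications separately. For the \emph{only if} direction I would prove the contrapositive: if $G$ is not a star forest then it is not totally-omnitonal. Some component $H$ of $G$ is then a connected graph on at least three vertices that is not a star. A connected graph with no two independent edges is a star or $K_3$; hence either $H=K_3$, or $H$ contains a closest pair of independent edges joined by an edge, producing a $P_4\subseteq H$. In the first case colour one edge of that triangle blue and every other edge of $G$ red; in the second, colour the middle edge of such a $P_4$ blue and every other edge of $G$ red. Either way this is a specific $(e(G)-1,1)$-colouring of $G$, and, exactly as in the proof of Theorem \ref{not_tot} via Lemma \ref{p4k3}, a canonical colouring of $K_n$ — which exists for infinitely many $n$ with $|R|=|B|=n(n-1)/4$ — contains neither a $(2,1)$-coloured $K_3$ nor an $r-b-r$ coloured $P_4$; since any finite $\tot(n,G)$ would have to be strictly below $n(n-1)/4$, no such value exists for infinitely many $n$, so $G$ is not totally-omnitonal.

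For the \emph{if} direction, write $G=K_{1,p_1}\cup\cdots\cup K_{1,p_q}$ with $p_1\ge\cdots\ge p_q$, and note that since every component is a star, a 2-colouring $f$ of $E(G)$ is determined up to equivalence by the vector of red/blue edge counts $(r_j,b_j)$ with $r_j+b_j=p_j$ on the components. Fix such an $f$, set $M(p_1,\dots,p_q,n)=(p_1+\cdots+p_q+q-2)n$, and assume $n\ge 4(p_1+\cdots+p_q+q-1)$. I would first check the regime is non-vacuous: $2M\le\binom{n}{2}$ reduces to $4(p_1+\cdots+p_q+q-2)\le n-4<n-1$, which holds, so colourings $g$ of $K_n$ with $\min\{|R|,|B|\}\ge M$ exist. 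Then I would induct on $q$: the case $q=1$ is Theorem \ref{chm4_1}. For the step, given $g$ with $\min\{|R|,|B|\}\ge M$, use $n>4p_q$ and $\min\{|R|,|B|\}>(p_q-1)n\ge\ot(n,K_{1,p_q})$ to extract, via Theorem \ref{chm4_1}, an $(r_q,b_q)$-coloured copy of $K_{1,p_q}$; delete its $p_q+1$ vertices and all incident edges. One then verifies that the residual clique on $n^*=n-(p_q+1)$ vertices satisfies $n^*>4(p_1+\cdots+p_{q-1}+(q-1)-1)$ and $\min\{|R^*|,|B^*|\}>(p_1+\cdots+p_{q-1}+(q-1)-2)n^*$, so the induction hypothesis applies to $G\setminus K_{1,p_q}$ and yields an $f$-coloured copy of it disjoint from the star already found; their union is the desired $f$-coloured copy of $G$.

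The main obstacle, as usual in such amortised-counting inductions, is the bookkeeping in the step: one must confirm that removing the $p_q+1$ vertices of the extracted star destroys at most $\binom{p_q+1}{2}+(p_q+1)(n-p_q-1)<(p_q+1)n$ edges of each colour class, so that $\min\{|R|,|B|\}\ge M(p_1,\dots,p_q,n)$ drops to something still exceeding $M(p_1,\dots,p_{q-1},n)-(p_q+1)n=(p_1+\cdots+p_{q-1}+q-3)n\ge(p_1+\cdots+p_{q-1}+q-3)n^*$, with every inequality remaining strict thanks to the slack built into the hypothesis $n\ge 4(p_1+\cdots+p_q+q-1)$; the ordering $p_1\ge p_q$ is precisely what licenses applying Theorem \ref{chm4_1} to the last component at this scale.
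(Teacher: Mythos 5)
Your proof of the quantitative bound is correct and follows essentially the same route as the paper: induction on the number $q$ of star components, extracting an appropriately coloured $K_{1,p_q}$ via Theorem \ref{chm4_1}, deleting its $p_q+1$ vertices, and verifying the same residual inequalities $n^*>4(p_1+\cdots+p_{q-1}+q-2)$ and $\min\{|R^*|,|B^*|\}>(p_1+\cdots+p_{q-1}+q-3)n^*$. The ``only if'' discussion you include is not needed for this corollary (which only asserts the bound for star forests), but it does no harm.
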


\bibliographystyle{plain}
\bibliography{mbmbib}
\end{document}